\newtheorem{lemma}{Lemma}[section]
\newtheorem{definition}[lemma]{Definition}
\newtheorem{proposition}[lemma]{Proposition}
\newtheorem{theorem}[lemma]{Theorem}
\newtheorem{remark}[lemma]{Remark}
\newtheorem{condition}[lemma]{Condition}
\newtheorem{example}[lemma]{Example}
\newcommand{\cslat}{commutator semi-lattice}
\newcommand{\clat}{commutator lattice}
\newcommand{\jac}{Jacobi commutator semi-lattice}
\newcommand{\ass}{associative commutator lattice}
\newcommand{\asss}{associative commutator semi-lattice}
\newcommand{\C}{\mathbb{C}}
\newcommand{\I}{\mathbb{I}}
\newcommand{\sub}[1]{\mathbf{Sub}(#1)}
\newcommand{\nsub}[1]{\mathbf{NSub}(#1)}
\newcounter{tmp}
\begin{document}
\title{
Hall's criterion for nilpotence in semi-abelian categories
}
\author{J. R. A. Gray}
\maketitle
\begin{abstract}
A well-known theorem of P. Hall, usually called Hall's criterion for nilpotence, states: a group $G$ is nilpotent whenever it has a normal subgroup $N$ such that $G/[N,N]$ and $N$ are nilpotent. We widely generalize this result, replacing groups with objects in an abstract semi-abelian category satisfying suitable conditions. In particular, these conditions are satisfied in any algebraically coherent semi-abelian category, and hence in many categories of classical algebraic structures (including a few where Hall's theorem is already known). Note that the categories of crossed modules, crossed Lie algebras, $n$-cat groups, and cocommutative Hopf algebras over fields are also algebraically coherent. We give, however, a counter-example showing that Hall's theorem does not hold in the (semi-abelian) variety of non-associative rings.
\end{abstract}
\section{Introduction}
A well-known theorem of P.~Hall \cite{HALL:1958} (see also D.~J.~S.~Robinson \cite{ROBISON:1968}), called Hall's criterion for nilpotence, states: a group $G$ is nilpotent whenever it has a normal subgroup $N$ such that $G/[N,N]$ and $N$ are nilpotent.
Analogous results for Lie algebras and for certain types of non-associative algebras were proved by C.~Y.~Chao  \cite{CHAO:1968} and E.~L.~Stitzinger \cite{STITZINGER:1978}, respectively. Hall's theorem can be reformulated as a weak form of extension closedness for the class of nilpotent groups: if $p: E\to B$ is a surjective group homomorphism with nilpotent $B$ and the kernel contained in the commutator $[N,N]$ of a normal nilpotent subgroup $N$ of $E$, then $E$ is a nilpotent group.

This paper is devoted to a wide generalization of this fact (Theorem \ref{theorem: main}): we replace a surjective homomorphism with a regular epimorphism in an abstract semi-abelian category \cite{JANELIDZE_MARKI_THOLEN:2002} satisfying (i) Huq and Higgins commutators of normal monomorphisms coincide; (ii) Huq commutators distribute over binary joins (restricted to normal monomorphisms); (iii) the \emph{Jacobi identity} (Condition \ref{conditions} (b)(ii)) holds for Huq commutators.  These conditions turn out to be satisfied in any semi-abelian category that is algebraically coherent in the sense of \cite{CIGOLI_GRAY_VAN_DER_LINDEN:2015b}, and therefore in each of the above-mentioned concrete categories as well as in all categories of interest in the sense of G.~Orzech \cite{ORZECH:1972}. Note that algebraically coherent semi-abelian categories include the categories of: cocommutative Hopf algebras over an arbitrary field, (pre)crossed modules, crossed Lie algebras, and $n$-cat groups in the sense of J.-L.~Loday \cite{LODAY:1982} -- see Remark \ref{remark:main} for further details and references.

Our approach substantially uses what we call Jacobi commutator semi-lattices; it is a notion intermediate between commutator posets and associative commutator lattices in the sense of E. Mehdi-Nezhad \cite{MEHDI-NEZHAD:2015}.

\section{Jacobi commutator semi-lattices}
In this section we introduce a `theory of \jac{}s' and develop it essentially as far as necessary for the purposes of Section \ref{section: main}, where we use it in the proof of our main result on nilpotent objects in a semi-abelian category.

\begin{definition}\label{definition:cslat}(c.f. Definition 4.1 of \cite{MEHDI-NEZHAD:2015})
A \emph{\cslat{}} is a triple $(X,\leq,\cdot)$ where $X$ is a set, $\leq$ is a binary relation on $X$, and $\cdot$ is a binary operation on $X$ satisfying:
\begin{enumerate}[(a)]
\item $(X,\leq)$ is a join semi-lattice;
\item the operation $\cdot$ is commutative;
\item for each $a,b \in X$, $a\cdot b \leq b$;
\item for each $a,b,c \in X$, $a\cdot (b\vee c) = (a\cdot b) \vee  (a \cdot c)$.
\end{enumerate}
\begin{remark}
Note that for a \cslat{} $(X,\leq,\cdot)$ and an element $x$ in $X$, the map 
\[
\xymatrix@C=3ex@R=0.5ex{
X \ar[r]^{x\cdot-} & X\\
y\ar@{|->}[r] & x\cdot y
}
\]
is order preserving.
\end{remark}
\end{definition}
\begin{definition}\label{definition: jac}
A \cslat{} $(X,\leq,\cdot)$ is a \emph{\jac{}} if 
\begin{enumerate}[(a)]
\item \label{jacobi}for each $a,b,c \in X$, $a\cdot(b\cdot c) \leq ((a\cdot b) \cdot c) \vee (b\cdot (a\cdot c))$;
\end{enumerate}
\setcounter{tmp}{\arabic{enumi}}
and is an \emph{\asss{}} if
\begin{enumerate}[(a)]
\setcounter{enumi}{\arabic{tmp}}
\item \label{ass}for each $a,b,c \in X$, $a\cdot(b\cdot c) = (a\cdot b)\cdot c$.
\end{enumerate}
\end{definition}
\begin{definition}
A \emph{derivation} of a \cslat{} $(X,\leq,\cdot)$ is a map $f:X\to X$ which preserves joins and satisfies:
\begin{enumerate}[(a)]
\item for each $a,b\in X$, $f(a\cdot b) \leq (f(a)\cdot b) \vee (a\cdot f(b))$. 
\end{enumerate}
A derivation $f$ of a \cslat{} $(X,\leq,\cdot)$ is an \emph{inner derivation} if there exists $x$ in $X$ such that $f=x\cdot -$, that is, for each $a$ in $X$, $f(a)=x\cdot a$. 
\end{definition}
\begin{remark}\label{remark: simple}Note that for a \cslat{} $(X,\leq,\cdot)$:
\begin{enumerate}[(a)]
\item the Jacobi identity (Condition \ref{definition: jac} \eqref{jacobi}) easily follows from associativity (Condition \ref{definition: jac} \eqref{ass}), and is equivalent to the condition: for each $x$ in $X$, the map $x\cdot -$ is an inner derivation of $(X,\leq,\cdot)$
\item a map $f:X\to X$ which is bounded above by $1_X$ (that is $f(x)\leq x$ for each $x$ in $X$) and preserves joins is necessarily a derivation if for each $a,b \in X$, $f(a\cdot b) \leq f(a)\cdot f(b)$.
\end{enumerate}
\end{remark}
It is easy to see that every complete lattice admits a largest binary operation making it into a \clat{}. On the other hand it doesn't seem in general possible to construct a largest binary operation making it into a \jac{}. However, binary meet is the largest such operation when $(X,\leq)$ is a distributive lattice. In fact we have:
\begin{proposition}
If $(X,\leq)$ is a distributive lattice, then $(X,\leq, \wedge)$ is a \ass{} and a morphism $f: (X,\leq) \to (X,\leq)$ which preserves joins is a derivation if and only if it is bounded above by $1_X$.
\end{proposition}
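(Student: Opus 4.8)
The plan is to check the defining axioms of an \ass{} directly from the lattice and distributive laws, and then to prove the two directions of the equivalence separately, invoking Remark~\ref{remark: simple}\,(b) for the less immediate one.

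First I would verify that $(X,\leq,\wedge)$ is an \ass{}. Since $(X,\leq)$ is a lattice it is in particular a join semi-lattice, so Definition~\ref{definition:cslat}\,(a) holds; (b) is the commutativity of $\wedge$; (c) is the inequality $a\wedge b\leq b$, immediate from the definition of meet; (d) is exactly the distributive law $a\wedge(b\vee c)=(a\wedge b)\vee(a\wedge c)$ assumed of $(X,\leq)$; and Condition~\ref{definition: jac}\,\eqref{ass} is the associativity of $\wedge$. (By Remark~\ref{remark: simple}\,(a) the Jacobi identity then follows as well, but here we in fact have full associativity for free.) The fact that $(X,\leq)$ is a lattice, not merely a join semi-lattice, is precisely what makes this a commutator \emph{lattice} rather than only a commutator semi-lattice.

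For the equivalence, note first that any join-preserving $f$ is order preserving: if $a\leq b$ then $f(b)=f(a\vee b)=f(a)\vee f(b)$, whence $f(a)\leq f(b)$. Now suppose $f$ preserves joins and is a derivation; applying the derivation inequality with $a=b=x$ gives
\[
f(x)=f(x\wedge x)\leq (f(x)\wedge x)\vee(x\wedge f(x))=f(x)\wedge x\leq x,
\]
so $f$ is bounded above by $1_X$. Conversely, suppose $f$ preserves joins and $f(x)\leq x$ for all $x$. From $a\wedge b\leq a$, $a\wedge b\leq b$, and monotonicity of $f$ we get $f(a\wedge b)\leq f(a)$ and $f(a\wedge b)\leq f(b)$, hence $f(a\wedge b)\leq f(a)\wedge f(b)$. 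By Remark~\ref{remark: simple}\,(b), a join-preserving map bounded above by $1_X$ which satisfies $f(a\cdot b)\leq f(a)\cdot f(b)$ is automatically a derivation, so $f$ is a derivation.

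I do not anticipate a real obstacle here: the whole statement unwinds to the distributive identity, the elementary observation that join-preservation forces monotonicity, and the packaging already provided by Remark~\ref{remark: simple}\,(b). The one point worth a moment's care is recognising that the ``official'' derivation inequality for $\wedge$ is implied by the a priori stronger bound $f(a\wedge b)\leq f(a)\wedge f(b)$ — which is exactly why Remark~\ref{remark: simple}\,(b) is the right tool to reach for, rather than attempting to verify the join-of-two-terms inequality head-on.
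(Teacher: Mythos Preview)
Your proof is correct and follows essentially the same approach as the paper: both verify the \ass{} axioms directly, both obtain boundedness from the derivation inequality at $a=b=x$ via $f(x)=f(x\wedge x)\leq f(x)\wedge x\leq x$, and both deduce the converse from Remark~\ref{remark: simple}\,(b) together with the observation that an order-preserving map satisfies $f(a\wedge b)\leq f(a)\wedge f(b)$. Your version is simply more explicit, in particular in spelling out why join-preservation implies monotonicity, which the paper leaves tacit.
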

\begin{proof}
It is immediate that $(X,\leq,\wedge)$ is a \ass{}. On the other hand if $f$ is a derivation, then for each $a$ in $X$, $f(a)=f(a\wedge a)\leq f(a)\wedge a\leq a$ and so $f$ is bounded above by $1_X$. The converse follows from Remark \ref{remark: simple} (b) since $f$, being an order preserving map, satisfies the condition: for each $a,b\in X$, $f(a\wedge b) \leq f(a)\wedge f(b)$.
\end{proof}
Let us note as an aside related to commutator theory of arithmetical categories that:
\begin{proposition}
For a \cslat{} $(X,\leq, \cdot)$ if $\cdot$ is idempotent, then $a\cdot b$ is the meet of $a$ and $b$, and $(X,\leq)$ is a distributive lattice.
\end{proposition}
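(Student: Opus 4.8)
The plan is to verify, in order: (1) that $a\cdot b$ is a lower bound of $a$ and $b$; (2) that it is the greatest such, hence $a\cdot b = a\wedge b$ and $(X,\leq)$ is a lattice; and (3) that distributivity of this lattice falls out of axiom (d).

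First I would observe that axiom (c) gives $a\cdot b\le b$ directly, while combining (c) with commutativity (axiom (b)) gives $a\cdot b=b\cdot a\le a$; so $a\cdot b$ is a common lower bound of $a$ and $b$.

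Next, for an arbitrary common lower bound $c$ of $a$ and $b$ I would show $c\le a\cdot b$. Since $c\le b$ we have $b=b\vee c$, so by commutativity, axiom (d), and idempotence,
\[ b\cdot c = c\cdot(b\vee c)=(c\cdot b)\vee(c\cdot c)=(c\cdot b)\vee c, \]
whence $c\le b\cdot c$. Since $c\le a$ we have $a=a\vee c$, so by commutativity and axiom (d),
\[ a\cdot b = b\cdot(a\vee c)=(b\cdot a)\vee(b\cdot c), \]
whence $b\cdot c\le a\cdot b$. Chaining these, $c\le b\cdot c\le a\cdot b$; therefore $a\cdot b$ is the meet $a\wedge b$. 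In particular all binary meets exist, so $(X,\le)$ is a lattice. Finally, substituting $\wedge$ for $\cdot$ in axiom (d) gives $a\wedge(b\vee c)=(a\wedge b)\vee(a\wedge c)$ for all $a,b,c\in X$, and since one distributive law implies the other in any lattice, $(X,\le)$ is distributive.

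The computations are routine; the only point that needs attention is that axiom (d) distributes $\cdot$ over $\vee$ only in its right-hand argument, so commutativity must be invoked to move the relevant join into that slot — this is why both displays begin by flipping the product. A secondary, purely bookkeeping point is to recall the standard equivalence of the two lattice distributive laws, so that checking (d) with $\wedge$ in place of $\cdot$ is enough. I do not expect a genuine obstacle beyond this organization.
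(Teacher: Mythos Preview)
Your proof is correct and follows the same strategy as the paper: show $a\cdot b$ is a lower bound via axiom (c) and commutativity, show it is the greatest lower bound using idempotence, and read off distributivity from axiom (d). The paper compresses your greatest-lower-bound argument into the single line $w = w\cdot w \leq a\cdot b$ (invoking the already-recorded monotonicity of $\cdot$ in each argument), whereas you unpack that monotonicity by two explicit applications of axiom (d); the content is the same.
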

\begin{proof}
For $w,a,b$ in $X$.
If $w \leq a$ and $w \leq b$, then $w = w\cdot w \leq a\cdot b$. Conversely if $w \leq a\cdot b$, then trivially $w \leq a$ and $w \leq b$. 
\end{proof}
The remainder of this section is devoted to showing that for $f$ a derivation bounded above by $1_X$ and for $g=x\cdot -$ an inner derivation of a \jac{} $(X,\leq,\cdot)$, if for some $y\geq x$ in $X$ and some positive integer $m$, $f^m(y)\leq g(x)$, then for each positive integer $k$ there exists a positive integer $m_k$ such that $f^{m_k}(y) \leq g^k(x)$. Our proof of this fact depends on a few facts.
 
By a routine argument using induction we obtain:
\begin{proposition}
Let $f$ be a derivation of a \cslat{} ${(X,\leq,\cdot)}$. For each $a, b$ in $X$ and for each non-negative integer $n$ we have
\[
f^n(a\cdot b) \leq \vee_{i=0}^n f^i(a)\cdot f^{n-i}(b).
\]
\end{proposition}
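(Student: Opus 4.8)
The plan is to prove this by induction on $n$, mimicking the standard proof of the higher ``Leibniz rule''. The only ingredients needed are that a derivation preserves (binary, hence finite) joins — so in particular it is order preserving — together with its defining inequality $f(a\cdot b)\leq (f(a)\cdot b)\vee(a\cdot f(b))$. The base case $n=0$ is the trivial equality $a\cdot b = f^0(a)\cdot f^0(b)$, so there is nothing to check.

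For the inductive step, assume the inequality for some $n\geq 0$ and apply $f$ to both sides. Since $f$ is order preserving and commutes with the finite join on the right, one obtains
\[
f^{n+1}(a\cdot b) \leq \vee_{i=0}^{n} f\bigl(f^i(a)\cdot f^{n-i}(b)\bigr).
\]
Now apply the derivation inequality to each summand, with $a$ and $b$ replaced by $f^i(a)$ and $f^{n-i}(b)$, to bound $f\bigl(f^i(a)\cdot f^{n-i}(b)\bigr)$ above by $\bigl(f^{i+1}(a)\cdot f^{n-i}(b)\bigr)\vee\bigl(f^{i}(a)\cdot f^{n+1-i}(b)\bigr)$. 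Taking the join over $i=0,\dots,n$, the first family of terms runs through $f^{j}(a)\cdot f^{n+1-j}(b)$ for $j=1,\dots,n+1$ and the second through the same expression for $j=0,\dots,n$; together they cover all $j=0,\dots,n+1$, which is exactly the desired upper bound $\vee_{j=0}^{n+1} f^{j}(a)\cdot f^{n+1-j}(b)$.

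I do not foresee a genuine obstacle, since the argument is entirely formal. The only points deserving a moment's care are that join-preservation for binary joins propagates to the finite joins appearing above (so that $f$ may legitimately be pushed inside the join in the displayed step), and the index bookkeeping in the final merge — the term $j=0$ arising only from the second family, the term $j=n+1$ only from the first, and all intermediate terms from both — which is precisely the reason induction works here just as in the classical case.
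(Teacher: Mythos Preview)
Your proof is correct and is precisely the ``routine argument using induction'' that the paper invokes without spelling out; the induction step via the Leibniz-type expansion and the reindexing of the two families of terms is exactly what is intended.
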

\begin{lemma}
Let $f$ be a derivation of a \jac{} $(X,\leq,\cdot)$ bounded above by $1_X$, let $x$ be an element of $X$, and let $g : X\to X$ be the map defined by $g(s)=x\cdot s$.
If for some positive integer $m$, $f^{m}(x) \leq g(x)$, then for each positive integer $k$, 
\begin{align}\label{equation:1}
f^{n_k}(g^{k-1}(x)) \leq g^{k}(x)\end{align}
where $n_k = k(m-1)+1$.
\end{lemma}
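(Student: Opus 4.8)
The plan is to prove \eqref{equation:1} by induction on $k$. For $k=1$ it reads $f^{n_1}(g^{0}(x)) = f^{m}(x)\leq g(x)$, which is exactly the hypothesis, so the base case is free. For the inductive step I would assume $f^{n_k}(g^{k-1}(x))\leq g^{k}(x)$ and use that $n_{k+1}=n_k+(m-1)$ and $g^{k}(x)=x\cdot g^{k-1}(x)$, so that the goal $f^{n_{k+1}}(g^{k}(x))\leq g^{k+1}(x)$ becomes
\[
f^{n_k+m-1}\bigl(x\cdot g^{k-1}(x)\bigr)\leq x\cdot g^{k}(x).
\]
To the left-hand side I would apply the binomial-type proposition proved above, with $a=x$, $b=g^{k-1}(x)$ and $n=n_k+m-1$, which bounds it by the join $\bigvee_{i=0}^{n_k+m-1}f^{i}(x)\cdot f^{n_k+m-1-i}(g^{k-1}(x))$. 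It then suffices to check that each joinand lies below $g^{k+1}(x)=x\cdot g^{k}(x)$.

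I would split the index range at $i=m$. Since $f$ is bounded above by $1_X$, so is every power of $f$, and $f$, preserving joins, is order preserving. For $0\leq i\leq m-1$ we have $n_k+m-1-i\geq n_k$, hence $f^{n_k+m-1-i}(g^{k-1}(x))=f^{m-1-i}\bigl(f^{n_k}(g^{k-1}(x))\bigr)\leq f^{m-1-i}(g^{k}(x))\leq g^{k}(x)$ by the inductive hypothesis and boundedness; together with $f^{i}(x)\leq x$ and order-preservation of $\cdot$ in each variable, the joinand is below $x\cdot g^{k}(x)=g^{k+1}(x)$. For $m\leq i\leq n_k+m-1$ boundedness gives $f^{i}(x)\leq f^{m}(x)\leq g(x)=x\cdot x$ and $f^{n_k+m-1-i}(g^{k-1}(x))\leq g^{k-1}(x)$, so the joinand is below $(x\cdot x)\cdot g^{k-1}(x)$.

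What remains — and what I expect to be the crux — is the inequality $(x\cdot x)\cdot y\leq x\cdot(x\cdot y)$, valid for every $y\in X$; this is the one place where the Jacobi identity \eqref{jacobi} is indispensable, associativity not being available. I would obtain it by rewriting $(x\cdot x)\cdot y=y\cdot(x\cdot x)$ by commutativity, applying the Jacobi inequality with $a=y$ and $b=c=x$ to get $y\cdot(x\cdot x)\leq\bigl((y\cdot x)\cdot x\bigr)\vee\bigl(x\cdot(y\cdot x)\bigr)$, and then observing that commutativity collapses both $(y\cdot x)\cdot x$ and $x\cdot(y\cdot x)$ to $x\cdot(x\cdot y)$. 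Taking $y=g^{k-1}(x)$ turns the second-case bound into $(x\cdot x)\cdot g^{k-1}(x)\leq x\cdot\bigl(x\cdot g^{k-1}(x)\bigr)=x\cdot g^{k}(x)=g^{k+1}(x)$, so in both cases every joinand is $\leq g^{k+1}(x)$; joining over $i$ gives $f^{n_{k+1}}(g^{k}(x))\leq g^{k+1}(x)$ and closes the induction. The difficulty here is not conceptual depth but bookkeeping: aligning the split of the index range with the exponent arithmetic ($n_{k+1}=n_k+m-1$, and deciding which power of $f$ to ``move past'' $g^{k-1}(x)$), and isolating from Jacobi the precise special case that is needed.
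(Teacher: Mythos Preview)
Your proposal is correct and follows essentially the same route as the paper: induction on $k$, application of the Leibniz-type proposition to $f^{n_{k+1}}(x\cdot g^{k-1}(x))$, the same split of the index range at $i=m$, and the same use of the Jacobi identity (with $a=g^{k-1}(x)$, $b=c=x$) to reduce $(x\cdot x)\cdot g^{k-1}(x)$ to $g^{k+1}(x)$. The only cosmetic difference is that you isolate the inequality $(x\cdot x)\cdot y\leq x\cdot(x\cdot y)$ as a separate observation before specializing, whereas the paper applies Jacobi in place.
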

\begin{proof}
We prove the claim by induction on $k$. Since $n_1=m$ the claim holds by assumption when $k$ is $1$. Suppose \eqref{equation:1} holds for some positive integer $k$. By the previous proposition we have
\[
f^{n_{k+1}}(g^k(x))=f^{n_{k+1}}(x\cdot g^{k-1}(x)) \leq \vee_{i=0}^{n_{k+1}} f^i(x)\cdot f^{n_{k+1}-i}(g^{k-1}(x)).
\]
However, for $i \leq m-1$ since  $n_{k+1}-i\geq n_{k}$ we have \[
f^{n_{k+1}-i}(g^{k-1}(x))\leq f^{n_k}(g^{k-1}(x)) \leq g^k(x)\] and hence $f^{i}(x)\cdot f^{n_{k+1}-i}(g^{k-1}(x)) \leq x\cdot g^{k}(x)=g^{k+1}(x)$. On other hand if $i \geq m$, then 
$f^{i}(x) \leq g(x)=x\cdot x$ and so \begin{align*}
f^{i}(x) \cdot f^{n_{k+1}-i}(g^{k-1}(x)) &\leq (x\cdot x) \cdot g^{k-1}(x)\\ &= g^{k-1}(x)\cdot (x\cdot x) \\&\leq ((g^{k-1}(x)\cdot x)\cdot x)\vee (x\cdot (g^{k-1}(x)\cdot x))\\ &= g^{k+1}(x).\end{align*} 
Therefore, $f^{n_{k+1}}(g^{k}(x)) \leq g^{k+1}(x)$ and the claim holds by induction.
\end{proof}
\begin{proposition}\label{proposition:jac main}
Let $f$ be a derivation of a \jac{} $(X,\leq,\cdot)$ bounded above by $1_X$, let $x$ and $y$ be elements of $X$, and let $g$ be the inner derivation of $(X,\leq,\cdot)$ defined for each $s$ in $X$ by $g(s)=x\cdot s$. If $x\leq y$ and for some positive integer $m$, $f^m(y) \leq g(x)$, then for each positive integer $k$,
\begin{align}\label{equation:2}
 f^{m_k}(y) \leq g^k(x)
\end{align}
where $m_k = \frac{k(k+1)}{2}(m-1)+k$.
\end{proposition}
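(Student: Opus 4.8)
The plan is to obtain the statement from the Lemma above by a short induction on $k$, the crucial point being the arithmetic decomposition
\[
m_{k+1} = m_k + n_{k+1}, \qquad n_j := j(m-1)+1,
\]
equivalently $m_k = \sum_{j=1}^{k} n_j$; this is exactly what allows one application of the Lemma to be peeled off at each inductive step. Verifying $m_k + n_{k+1} = m_{k+1}$ is the one-line identity $\tfrac{k(k+1)}{2} + (k+1) = \tfrac{(k+1)(k+2)}{2}$.

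First I would record that the hypotheses of the Lemma hold for this $x$, $m$ and $g$: since $x \leq y$ and $f$, being a derivation, preserves joins and is therefore order preserving (as is each iterate $f^{n}$), we have $f^{m}(x) \leq f^{m}(y) \leq g(x)$. Hence the Lemma yields, for every positive integer $j$, the inequality $f^{n_j}\bigl(g^{j-1}(x)\bigr) \leq g^{j}(x)$.

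Then I would run the induction on $k$. For $k = 1$ we have $m_1 = (m-1)+1 = m$, so $f^{m_1}(y) = f^{m}(y) \leq g(x) = g^{1}(x)$ by assumption. Assuming \eqref{equation:2} for some $k \geq 1$, that is $f^{m_k}(y) \leq g^{k}(x)$, I apply the order-preserving map $f^{n_{k+1}}$ and then the case $j = k+1$ of the Lemma, namely $f^{n_{k+1}}\bigl(g^{k}(x)\bigr) \leq g^{k+1}(x)$, to obtain
\[
f^{m_{k+1}}(y) = f^{n_{k+1}}\bigl(f^{m_k}(y)\bigr) \leq f^{n_{k+1}}\bigl(g^{k}(x)\bigr) \leq g^{k+1}(x),
\]
where the first equality uses $m_{k+1} = m_k + n_{k+1}$. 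This completes the induction.

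I do not expect a genuine obstacle: the substantive work — the manipulations using commutativity, the Jacobi identity (Condition \ref{definition: jac} \eqref{jacobi}), and the fact that $f$ is a derivation bounded above by $1_X$ — has already been carried out in the proof of the Lemma, which isolates the hard case $y = x$. The only things to watch are the index bookkeeping above and the (immediate) passage from the Lemma's hypothesis $f^{m}(x) \leq g(x)$ to our hypothesis $f^{m}(y) \leq g(x)$ via $x \leq y$.
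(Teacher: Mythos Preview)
Your proposal is correct and follows essentially the same approach as the paper: an induction on $k$ using the arithmetic identity $m_{k+1}=m_k+n_{k+1}$, the observation $f^{m}(x)\leq f^{m}(y)\leq g(x)$ from $x\leq y$, and one application of the preceding Lemma at each step. Your write-up is merely a bit more explicit about the bookkeeping (the verification of $m_k=\sum_{j=1}^{k}n_j$ and the order-preservation of $f$), but the argument is the same.
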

\begin{proof}
We prove the claim by induction on $k$. Since $m_1=m$ the claim holds by assumption when $k=1$. 
Suppose \eqref{equation:2} holds for some positive integer $k$.
Since $f^m(x)\leq f^m(y) \leq g(x)$ and $m_{k+1} = n_{k+1} + m_{k}$ where $n_{k+1}$ is as in the previous lemma it follows by the previous lemma that
\[
f^{m_{k+1}}(y) = f^{n_{k+1}}(f^{m_{k}}(y)) \leq f^{n_{k+1}}(g^{k}(x)) \leq g^{k+1}(x).
\]
This completes the induction step and hence the proof of the claim.
\end{proof}
\section{The main result}\label{section: main}
In this section we prove our main result which as mentioned uses Proposition \ref{proposition:jac main} for its proof.

Let us recall the necessary background to explain precisely in which context we will prove our generalization.
Semi-abelian categories were introduced, by G.~Janelidze, L.~Marki, W.~Tholen in \cite{JANELIDZE_MARKI_THOLEN:2002}, to play a similar role for the categories of groups and algebras, as abelian categories play for the categories of abelian groups and modules. A category $\C$ is semi-abelian if it is pointed, Barr-exact \cite{BARR:1971}, Bourn protomodular \cite{BOURN:1991} and has finite coproducts. In a semi-abelian category there is a natural notion of when a pair of morphisms with common codomain commute. Note that this notion was first considered by S.~Huq \cite{HUQ:1968,HUQ:1971} in a closely related context.
 To recall how this notion is defined let us introduce some notation. We denote by $0$ a zero object in $\C$, that is, an object which is both terminal and initial. We also denote by $0$ each zero morphism, that is, a morphism which factors through a zero object. For $A$ and $B$ in $\C$ we denote by $(A\times B,\pi_1,\pi_2)$ a product of $A$ and $B$ in $\C$, and write $\langle 1,0\rangle: A\to A\times B$ and $\langle 0,1\rangle : B \to A\times B$ for the unique morphisms with $\pi_1\langle 1,0\rangle = 1_A$, $\pi_2\langle 1,0\rangle =0$, $\pi_1 \langle 0,1\rangle=0$ and $\pi_2 \langle 0,1\rangle= 1_B$.
A pair of morphisms $f:A\to C$ and $g: B\to C$ in $\C$, commute, if there is a (necessarily unique) morphism $\varphi: A\times B \to C$ making the diagram 
\begin{equation}\label{diag:commutes}
\xymatrix{
A \ar[r]^-{\langle 1,0\rangle}\ar@/_2ex/[dr]_{f} & A\times B \ar[d]^{\varphi} & B\ar[l]_-{\langle 0, 1\rangle}\ar@/^2ex/[dl]^{g}\\
& C &
}
\end{equation} 
commute.
More generally the Huq commutator of $f:A\to C$ and $g:B\to C$ is defined to be the smallest normal subobject $N$ of $C$ such that $qf$ and $qg$ commute, where $q:C\to C/N$ is the cokernel of the associated normal monomorphism $N\to C$. It turns out that Huq commutators always exist in semi-abelian categories and, as shown by D.~Bourn in \cite{BOURN:2004}, the \emph{commutator quotient} $q: C\to C/N$ can be constructed as part of the colimiting cone of the outer arrows of \eqref{diag:commutes}.  For subobjects $S$ and $T$ of an object $C$ in $\C$ we will write $[S,T]_C$ for the Huq commutator of the associated monomorphisms $S \to C$ and $T\to C$. We write $\sub{C}$ and $\nsub{C}$ for the lattices of subobjects and normal subobjects of $C$, respectively. Recall also that $C$ is nilpotent \cite{HUQ:1968} if there exists a non-negative integer $n$ such that $\gamma_C^n(C)=0$, where $\gamma_C$ is the map sending $S$ in $\sub{C}$ to $[C,S]_C$ in $\sub{C}$. The least such $n$ is the nilpotency class of $C$. 

We will prove our generalization in the context of a semi-abelian category $\C$ satisfying the following conditions:
\begin{condition}\label{conditions}\ 
\begin{enumerate}[(a)]
\item for $K,L \leq S\leq C$, if $K$ and $L$ are normal in $C$, then $[K,L]_S=[K,L]_C$ (in $\sub{C}$);
\item for $K,L,M$ normal in $C$:
\begin{enumerate}[(i)]
\item  $[K,L\vee M]_C= [K,L]_C \vee [K,M]_C$ in $\sub{C}$;
\item $[K,[L,M]_C]_C\leq [[K,L]_C,M]_C \vee [L,[K,M]_C]_C$ in $\sub{C}$.
\end{enumerate}
\end{enumerate}
\end{condition}
Before we state and prove our generalization, let us briefly recall how each of these conditions fits into the literature and make a few other comments:
\begin{remark}\label{remark:main}\ 
\begin{itemize}
\item  According to Theorem 2.8 of \cite{CIGOLI_GRAY_VAN_DER_LINDEN:2015a}, Condition \ref{conditions} (a) is equivalent to the coincidence of the Huq and Higgins commutators of normal subobjects;
\item If $\C$ is a semi-abelian algebraically cartesian closed category (in the sense of \cite{BOURN_GRAY:2012}, first considered in \cite{GRAY:2010a,GRAY:2012b}), then $\C$ has centralizers and hence Condition \ref{conditions} (b) (i) holds (see \cite{GRAY:2013a});
\item If $\C$ is semi-abelian and the Huq and Smith-Pedicchio commutators coincide (in the sense of \cite{MARTINS-FERREIRA_VAN_DER_LINDEN:2012}), then Condition \ref{conditions} (b) (i) again holds, since the Smith-Pedicchio commutator distributes over joins in an exact Mal'tsev category with coequalizers \cite{PEDICCHIO:1995}. 
\item We call Condition \ref{conditions} (b) (ii), the Jacobi identity. It is a special case of the so-called \emph{three subgroups lemma} in the context of the category of groups. Note that according to P.~Hall in \cite{HALL:1958} in the context of groups what we are calling the Jacobi identity was first proved by himself in \cite{HALL:1933}, while the above mentioned three subgroups lemma was proved later by L.~Kaluzhnin in \cite{KALUZHNIN:1950};
\item Condition \ref{conditions} follows in a semi-abelian category from algebraic coherence in the sense of \cite{CIGOLI_GRAY_VAN_DER_LINDEN:2015b} (see Theorems 6.18, 6.27 and 7.1 there);
\item Examples of algebraically coherent semi-abelian categories include the categories of groups, rings, Lie algebras over a commutative ring, and all categories of interest in the sense of G.~Orzech \cite{ORZECH:1972}. The category of cocommutative Hopf algebras over an arbitrary field has been shown by M.~Gran, F.~Sterck and J.~Vercruysse to be algebraically coherent and semi-abelian \cite{GRAN_STERCK_VERCRUYSSE:2019}. Further examples can be obtained from the fact that if $\C$ is an algebraically coherent semi-abelian category, then so is: each functor category $\C^\I$ for an arbitrary category $\I$; each full subcategory of $\C$ closed under products and subobjects; each category of split epimorphism in $\C$ with codomain $B$ where $B$ is an object in $\C$ (see Corollary 3.5 and Propositions 3.6 and 3.7 of \cite{CIGOLI_GRAY_VAN_DER_LINDEN:2015b}). In particular, it follows that the categories of (pre)crossed modules (of groups), crossed Lie algebras, and $n$-cat groups (in the sense of J.-L.~Loday \cite{LODAY:1982}) are all examples, as well as, each category of internal (pre)crossed modules (in the sense of G.~Janelidze in \cite{JANELIDZE:2003}) of an algebraically coherent semi-abelian category.
\end{itemize}
\end{remark}

Recalling that in a semi-abelian category the binary join of normal subobjects as subobjects is necessarily normal, we see that Condition \ref{conditions} implies:
\begin{proposition}\label{proposition: main}
If $\C$ is a semi-abelian category satisfying Condition \ref{conditions}, then for each object $C$ in $\C$ the triple $(\nsub{C},\leq,\cdot)$ where $(\nsub{C},\leq)$ is the lattice of normal subobjects of $C$, and $\cdot$ is defined by $K\cdot L= [K,L]_C$ is a \jac{}.  
\end{proposition}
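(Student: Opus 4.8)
The plan is to verify, one by one, the four defining conditions of a \cslat{} (Definition \ref{definition:cslat}) together with the Jacobi identity (Definition \ref{definition: jac} \eqref{jacobi}) for the triple $(\nsub{C},\leq,\cdot)$, translating each into a statement about Huq commutators of normal subobjects and then invoking the corresponding hypothesis from Condition \ref{conditions}. The preliminary observation, as already noted in the text, is that in a semi-abelian category the join of two normal subobjects (taken in $\sub{C}$) is again normal, so $(\nsub{C},\leq)$ really is a sub-join-semi-lattice of $\sub{C}$; this gives condition (a). Condition (b), commutativity of $\cdot$, is the symmetry of the Huq commutator, which holds in any semi-abelian (indeed any unital) category.

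For condition (c) I would argue that $[K,L]_C \leq L$ whenever $K$ and $L$ are normal in $C$: the Huq commutator $[K,L]_C$ is by definition the smallest normal subobject $N$ of $C$ such that the composites of $K\to C$ and $L\to C$ with $C\to C/N$ commute; since $L$ is normal in $C$, one checks that modulo $L$ the images of $K$ and $L$ do commute (the image of $L$ becomes zero), so $N = L$ is a candidate and hence $[K,L]_C\leq L$. Condition (d), distributivity $K\cdot(L\vee M) = (K\cdot L)\vee(K\cdot M)$, is precisely Condition \ref{conditions} (b)(i) — note one needs $L\vee M$ to be normal for the left-hand side to be defined, which is exactly the preliminary observation. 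Finally, the Jacobi identity $K\cdot(L\cdot M) \leq ((K\cdot L)\cdot M)\vee(L\cdot(K\cdot M))$ is Condition \ref{conditions} (b)(ii) verbatim, once one observes that each of $K\cdot L$, $K\cdot M$, $L\cdot M$ is again normal in $C$ (being a Huq commutator of normal subobjects, which is normal) so that the iterated commutators make sense.

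The one genuine subtlety I anticipate is bookkeeping about normality: each clause of Condition \ref{conditions} is stated for subobjects that are normal in $C$, so at every step where a commutator or a join appears as an argument of a further operation I must confirm it lies in $\nsub{C}$ rather than merely in $\sub{C}$. This is where Condition \ref{conditions} (a) is quietly used — it guarantees (via the Higgins commutator description, or directly) that Huq commutators of normal subobjects are normal — together with the normality of joins of normal subobjects. Apart from this, the proof is a direct dictionary translation with no real computation; I would present it as: "$(\nsub{C},\leq)$ is a join semi-lattice since joins of normal subobjects are normal; commutativity of $\cdot$ is symmetry of the Huq commutator; $[K,L]_C\leq L$ holds because $L$ is a valid candidate for the defining property of the commutator; the remaining two conditions are Condition \ref{conditions} (b)(i) and (b)(ii) respectively, using that all commutators involved are again normal."
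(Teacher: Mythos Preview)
Your approach is correct and matches the paper's treatment, which simply asserts that the proposition follows from Condition \ref{conditions} together with the fact that joins of normal subobjects are normal. Your axiom-by-axiom verification is exactly what the paper leaves implicit.

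One correction, though: your ``one genuine subtlety'' is not actually a subtlety, and Condition~\ref{conditions}\,(a) plays no role here. By definition the Huq commutator $[K,L]_C$ is the \emph{smallest normal subobject} of $C$ with the relevant property, so $[K,L]_C$ is always normal in $C$, for arbitrary subobjects $K$ and $L$ --- no hypothesis is required. Condition~\ref{conditions}\,(a) concerns the \emph{independence of the ambient object} (that $[K,L]_S = [K,L]_C$ when $K,L\leq S\leq C$), which is equivalent to the coincidence of Huq and Higgins commutators for normal subobjects; this is used only later, in the proof of Theorem~\ref{theorem: main}, to identify $g(K)$ with $[N,K]_N$. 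For the present proposition only parts (b)(i) and (b)(ii) of Condition~\ref{conditions} are invoked, alongside the general facts that Huq commutators are symmetric, normal, and satisfy $[K,L]_C\leq L$ when $L$ is normal.
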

As mentioned in Remark \ref{remark:main}, Theorems 6.18, 6.27 and 7.1 of \cite{CIGOLI_GRAY_VAN_DER_LINDEN:2015b} imply that every algebraically coherent semi-abelian category satisfies Condition \ref{conditions}. We have:
\begin{theorem}\label{theorem: main}
Let $\C$ be an algebraically coherent semi-abelian category (more generally a semi-abelian category satisfying Condition \ref{conditions})
and let $p:E \to B$ be an extension of a nilpotent object $B$ in $\C$.
If the kernel of $p$ is contained in the Huq commutator $[N,N]_N$ of a nilpotent normal subobject $N$ of $E$, then $E$ is nilpotent. Furthermore, if $N$ is of nilpotency class $c$ and $B$ is of nilpotency class $d$, then $E$ is of nilpotency class at most $\frac{c(c+1)}{2}(d-1) +c$.
\end{theorem}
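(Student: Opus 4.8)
The plan is to obtain the theorem as a single instance of Proposition~\ref{proposition:jac main}. By Proposition~\ref{proposition: main} the triple $(\nsub{E},\leq,\cdot)$ with $K\cdot L=[K,L]_E$ is a \jac{}; I would apply Proposition~\ref{proposition:jac main} to it with $f=\gamma_E$ (the map $S\mapsto[E,S]_E$), $x=N$, $y=E$ and $m=d$, so that $g$ is the inner derivation $g(S)=[N,S]_E$. First one checks the standing hypotheses on $f$: it restricts to $\nsub{E}$ because Huq commutators of normal subobjects are normal; it is bounded above by $1_{\nsub{E}}$ because $[E,S]_E\leq S$ by Definition~\ref{definition:cslat}~(c); and it is a derivation since, being $E\cdot-$, it is the inner derivation associated with the top element, hence a derivation by Remark~\ref{remark: simple}~(a) (equivalently, the derivation inequality for $\gamma_E$ is exactly the instance of the Jacobi identity of Condition~\ref{conditions}~(b)(ii) with first argument $E$). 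Trivially $x=N\leq E=y$.

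Next I would verify the one nontrivial hypothesis of Proposition~\ref{proposition:jac main}, namely $f^m(y)\leq g(x)$, i.e.\ $\gamma_E^{\,d}(E)\leq[N,N]_E$. For this I use the standard fact that in a semi-abelian category the direct image along a regular epimorphism preserves Huq commutators and binary joins, so that $p$ carries $\gamma_E^{\,d}(E)$ onto $\gamma_B^{\,d}(B)$; since $B$ has nilpotency class $d$ this is $0$, whence $\gamma_E^{\,d}(E)$ lies in the kernel of $p$, and by hypothesis $\ker p\leq[N,N]_N$, which equals $[N,N]_E$ by Condition~\ref{conditions}~(a). Proposition~\ref{proposition:jac main} then gives, for every positive integer $k$,
\[
\gamma_E^{\,m_k}(E)\ \leq\ g^k(N),\qquad m_k=\frac{k(k+1)}{2}(d-1)+k .
\]

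To conclude I would identify $g^k(N)$ with the $k$-th term $\gamma_N^{\,k}(N)$ of the lower central series of $N$, by induction on $k$. The case $k=0$ is clear. If $g^k(N)=\gamma_N^{\,k}(N)$, this subobject lies in $N$ and is normal in $E$ (being an iterated Huq commutator of subobjects normal in $E$), so Condition~\ref{conditions}~(a), applied to $N$ and $\gamma_N^{\,k}(N)$ inside $N\leq E$, yields
\[
g^{k+1}(N)=[N,\gamma_N^{\,k}(N)]_E=[N,\gamma_N^{\,k}(N)]_N=\gamma_N^{\,k+1}(N),
\]
which is again normal in $E$. Taking $k=c$ and using that $N$ has nilpotency class $c$ gives $\gamma_E^{\,m_c}(E)\leq\gamma_N^{\,c}(N)=0$, so $E$ is nilpotent of class at most $m_c=\frac{c(c+1)}{2}(d-1)+c$.

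The genuinely new content is the reduction to Proposition~\ref{proposition:jac main}; after that, the only points that need care are the bookkeeping of normality in $E$ of the iterated commutators $g^k(N)$ (required in order to invoke Condition~\ref{conditions}~(a) at each step) and the use of preservation of Huq commutators by regular epimorphisms to get $\gamma_E^{\,d}(E)\leq\ker p$. I would also dispose of the degenerate case separately: if $N=0$ then $\ker p=0$, forcing the regular epimorphism $p$ to be a monomorphism and hence an isomorphism, so that $E\cong B$ is already nilpotent; otherwise $c\geq 1$ and Proposition~\ref{proposition:jac main} applies with $k=c$ as above.
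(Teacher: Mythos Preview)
Your proposal is correct and follows essentially the same route as the paper: apply Proposition~\ref{proposition:jac main} in the \jac{} $\nsub{E}$ with $f=[E,-]_E$, $g=[N,-]_E$, $x=N$, $y=E$, $m=d$, using preservation of Huq commutators along the regular epimorphism $p$ to get $f^d(E)\leq\ker p\leq[N,N]_E$, and then Condition~\ref{conditions}(a) to identify $g^k(N)$ with the lower central series of $N$. Your write-up is in fact slightly more detailed than the paper's---you make explicit why $f$ is a derivation (inner derivation on a \jac{}), why the iterated commutators $g^k(N)$ remain normal in $E$ so that Condition~\ref{conditions}(a) applies at each step, and you single out the degenerate case $N=0$---but the argument is the same.
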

\begin{proof}
Let $X=\nsub{E}$ and let $f : X\to X$ and $g:X\to X $ be the maps defined by $f(K)=[E,K]_E$ and $g(K)=[N,K]_E$.
Since Huq commutators are preserved by regular image along regular epimorphisms (see e.g.\ Theorem 5.2 of \cite{GRAN_JANELIDZE_URSINI:2014}) it follows that if $B$ is nilpotent of nilpotency class $d$, then $p(f^d(E))=0$ and so $f^d(E)$ is a subobject of the kernel of $p$ and hence a subobject of $g(N)$. Since $(X,\leq,\cdot)$, with the corresponding interpretation as in Proposition \ref{proposition: main}, is a \jac{}, and hence $f$ is a derivation bounded above by $1_X$, it follows from Proposition \ref{proposition:jac main} that for any positive integer $c$, \[f^{\frac{c(c+1)}{2} (d-1) +c}(E) \leq g^c(N).\] The claim is completed by noting that by Condition \ref{conditions} (a) for $K \leq N$, $g(K) = [N,K]_N$ and hence if $N$ is nilpotent of nilpotency class $c$, then $g^c(N)=0$.
\end{proof}
Let us end by showing that the previous theorem does not hold in the category of non-associative rings (i.e.\ not necessarily associative distributive rings). As explained at the end of \cite{GRAY:2013a}, the category of non-associative rings is a semi-abelian category satisfying Condition \ref{conditions} (b) (i) which is not algebraically cartesian closed (note the misprint there: $i_3$ should be $\langle 0,0,1,0\rangle$).
\begin{example} 
Let $B$ and $E$ be the non-associative rings with underlying abelian groups the free abelian groups on $\{b_1,b_2\}$ and $\{e_1,e_2,e_3\}$, respectively, and with multiplication defined on generators by
\[
\begin{tabular}{>{$}l<{$}|*{2}{>{$}l<{$}}}
 \cdot & b_1 & b_2 \\
\hline\vrule height 12pt width 0pt
b_1 & 0 & 0\\
b_2 & 0 & 0\\
\end{tabular}\hspace{2cm}
\begin{tabular}{>{$}l<{$}|*{3}{>{$}l<{$}}}
 \cdot & e_1 & e_2 & e_3\\
\hline\vrule height 12pt width 0pt
e_1 & 0 & 0 & 0\\
e_2 & 0 & e_3 & 0\\
e_3 & e_3 & 0 & 0.\\
\end{tabular}
\]
Note that this makes $B$ abelian and hence nilpotent.
An easy calculation shows that unique abelian group homomorphism $p:E\to B$ with $p(e_1)=b_1$, $p(e_2)=b_2$ and $p(e_3)=0$ is a non-associative ring homomorphism. We will show that $p$ satisfies the remaining requirements of the theorem but $E$ is not nilpotent. It is easy to see that the kernel of $p$ is $X=\langle\{e_3\}\rangle$ (i.e.\ the subgroup of $E$ generated by $e_3$). Now let $N=\langle\{e_2,e_3\}\rangle$ and note that it is a subring of $E$. Recall that if $S$ and $T$ are subrings of a non-associative ring $R$, then the Huq commutator $[S,T]_R$ is the smallest ideal of $R$ containing $\{s\cdot t\,|\,s \in S, t\in T\}\cup\{t\cdot s\,|\, s\in S, t\in T\}$.
Since $e_2^2=e_3$ and $e_3e_1=e_3$ it follows that $X$ is contained in both $[N,N]_N$ and $[E,X]_X$. On the other hand since $\{e_i\cdot e_j| i,j\in \{1,2,3\}\}=\{0, e_3\}$ it follows that $[N,N]_N=[E,E]_E=[E,X]_E=X$. Since $e_3^2=e_3\cdot e_2= e_2\cdot e_3=0$ it follows that $[N,X]_E=0$. Therefore $[N,[N,N]_N]_N=[N,X]_N=0$ and $[E,[E,E]_E]_E=[E,X]_E=X=[E,E]_E$ and so $N$ nilpotent but $E$ is not.
\end{example}
\providecommand{\bysame}{\leavevmode\hbox to3em{\hrulefill}\thinspace}
\providecommand{\MR}{\relax\ifhmode\unskip\space\fi MR }
\providecommand{\MRhref}[2]{%
  \href{http://www.ams.org/mathscinet-getitem?mr=#1}{#2}
}
\providecommand{\href}[2]{#2}

\end{document}